\definecolor{dark_purple}{rgb}{0.4, 0.0, 0.4}
\definecolor{dark_
}{rgb}{0.0, 0.7, 0.0}
\numberwithin{equation}{section}
\title{Complete Norm Preserving Extensions of Holomorphic Functions}
\author{Jim Agler
\and
\L{}ukasz Kosi\'nski
\thanks{Partially supported by the NCN grant SONATA BIS no. 2017/26/E/ST1/00723.
Also funded by the Priority Research Area SciMat under the program 
Excellence Initiative – Research University at the Jagiellonian University in Kraków}
\and
John E. M\raise.5ex\hbox{c}Carthy
\thanks{Partially supported by National Science Foundation Grant  
DMS 2054199}
}
\date{\today}
\newcommand{\mc}{M\raise.45ex\hbox{c}Carthy}
\let\i=\infty
\def\={\ = \ }
\def\d{\mathbb{D}}
\def\h{\mathcal{H}}
\def\be{\begin{equation}}
\def\ee{\end{equation}}
\def\D{\mathbb{D}}
\def\c{\mathbb{C}}
\def\d{\mathbb{D}}
\def\t{\mathbb{T}}
\def\k{\mathcal{K}}
\def\h{\mathcal{H}}
\def\l{\mathcal{L}}
\def\hol{{\rm Hol}}
\def\ess{\mathscr{S}}
\def\O{\Omega}
\def\l{\lambda}
\newcommand\C{{\mathbb C}}
\newcommand\T{{\mathcal T}}
\newcommand\Do{{\mathcal D}_0}
\def\be{\begin{equation}}
\def\ee{\end{equation}}
\def\beq{\begin{eqnarray*}}
	\def\eeq{\end{eqnarray*}}
\def\vs{\vskip 5pt}
\def\oec{{}{\hspace*{\fill} $\lhd$} \vskip 5pt \par}
\def\bl{\begin{lemma}}
	\def\el{\end{lemma}}
\def\bt{\begin{theorem}}
	\def\et{\end{theorem}}
\def\bprop{\begin{prop}}
	\def\eprop{\end{prop}}
\def\bd{\begin{definition}}
	\def\ed{\end{definition}}
\def\br{\begin{remark}}
	\def\er{\end{remark}}
\def\bexer{\begin{exercise}}
	\def\eexer{\end{exercise}}
\begin{document}

\bibliographystyle{plain}
\theoremstyle{definition}
\newtheorem{defin}[equation]{Definition}
\newtheorem{lem}[equation]{Lemma}
\newtheorem{prop}[equation]{Proposition}
\newtheorem{thm}[equation]{Theorem}
\newtheorem{claim}[equation]{Claim}
\newtheorem{ques}[equation]{Question}
\newtheorem{remark}[equation]{Remark}
\newtheorem{fact}[equation]{Fact}
\newtheorem{axiom}[equation]{Technical Axiom}
\newtheorem{newaxiom}[equation]{New Technical Axiom}
\newtheorem{cor}[equation]{Corollary}
\newtheorem{exam}[equation]{Example}
\maketitle

\begin{abstract}
We show that for every connected analytic subvariety $V$ there is a pseudoconvex set $\O$ such that every bounded matrix-valued holomorphic function on $V$ extends isometrically to $\O$. We prove that if $V$ is two analytic disks intersecting at one point, if every bounded scalar valued holomorphic function extends isometrically to $\O$, then so does every matrix-valued function. In the special case that $\O$ is the symmetrized bidisk, we show that this cannot be done by finding a linear
isometric extension from the functions that vanish at one point.
\end{abstract}


\section{Introduction}
By a {\em Cartan pair} we mean a pair $(\O,V)$ where 
$\Omega$ is a connected pseudo-convex set in $\C^n$ and $V$ is an analytic subvariety of $\Omega$.
The name is homage to H. Cartan, who  proved that every holomorphic function on $V$ (i.e. a function that locally agrees with the restriction of a holomorphic function defined on an
open set in $\c^n$) extends to a holomorphic function on all of $\O$ \cite{car51}. We say that a pair
$(\O,V)$ is a {\em norm preserving pair} (np pair for short) if it is a Cartan pair with the additional property that every bounded 
holomorphic function on $V$ extends isometrically to a bounded holomorphic function on $\O$. 

For a fixed domain $\O$, several papers have studied what analytic subvarieties gave rise to np pairs \cite{agmcvn, ghw08, maci19,kmc19, kmc20}. If $\O$ is suitably nice, the conclusion of these papers was that $V$ had to be a holomorphic retract of $\O$ for $(\O,V)$ to be an np pair. 
However, this is not true in general. The simplest example is the np pair $(\Delta,T)$,
where $\Delta$   is the diamond
$\{z\in \c^2:\ |z_1|+|z_2|<1\}$,  and $T =(\d\times \{0\})\cup (\{0\}\times \d)$.

In \cite{akmc21}, the perspective was shifted, to start with $V$ and try to find a pseudoconvex set $G$ so that
$(G,V)$ forms an np pair.
We showed this can always be done:
\begin{thm}
\label{thma1} \cite{akmc21}
	If $(\Omega,V)$ is a Cartan pair, then their exists $G$ such that $(G,V)$ is an np pair.
\end{thm}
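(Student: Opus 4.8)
The plan is to manufacture $G$ as a (possibly infinite) polydisk whose coordinates are exactly the bounded holomorphic functions on $V$, so that each such function becomes a scalar multiple of a coordinate projection and therefore extends to all of $G$ without increasing its norm. Let $\f$ denote the closed unit ball of $\hinf(V)$. First I would record the trivial but crucial reduction that, after rescaling, it suffices to extend functions of norm exactly $1$: if $\phi\in\hinf(V)$ is nonzero with $\|\phi\|_V=c$, then $\phi/c\in\f$, and an isometric extension of $\phi/c$ yields one of $\phi$ upon multiplying by $c$. Next, form $G:=\d^{\f}$ and the evaluation map $\Phi:V\to G$, $\Phi(z)=(f(z))_{f\in\f}$, which is holomorphic and lands in $G$ because each $f$ is bounded by $1$. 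On $G$ every coordinate projection $\pi_f$ is holomorphic with $\sup_G|\pi_f|=1$ and $\pi_f\circ\Phi=f$. Hence, once $\Phi$ realizes $V$ as a subvariety $V':=\Phi(V)$ of $G$, any $g\in\hinf(V')$ of norm $1$ corresponds to some $f\in\f$ and is extended isometrically by $\pi_f$; the rescaling remark then covers all of $\hinf(V')$. Thus $(G,V')$ is an np pair essentially by construction, and identifying $V$ with $V'$ proves the theorem.

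The real content is therefore to show that $\Phi$ embeds $V$ as a closed analytic subvariety of a genuine pseudoconvex set. The polydisk $G$ is convex, hence pseudoconvex; the difficulties are (i) that $\Phi$ be injective and an immersion, i.e. that $\hinf(V)$ separate points and tangents of $V$, and (ii) that $\Phi$ be proper onto its image, so that $V'$ is relatively closed and hence a subvariety of $G$, its analytic structure following from Cartan's extension theorem together with the standard coherence results for subvarieties of pseudoconvex sets. Separation is free when $\O$ is bounded, since the coordinate functions $z_1,\dots,z_n$ then already lie in $\hinf(V)$ and separate points and tangents; I would reduce the bounded case to this, and handle the degenerate case in which $\hinf(V)$ fails to separate by observing that there the extension problem collapses onto the separated quotient and is correspondingly easier.

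The genuinely delicate point is properness: a sequence in $V$ leaving every compact subset of $\O$ must be driven toward $\partial G$, i.e. some coordinate $|f|$ must tend to $1$ along it. For bounded $V$ this follows from the existence of peak-type functions in $\hinf(V)$; when $\O$ is unbounded and carries too few bounded holomorphic functions to detect escape to infinity, I would first replace $\O$ by a bounded pseudoconvex neighborhood of $V$, or pass to a relatively compact exhaustion, at the cost of shrinking $G$ to a pseudoconvex hull of $\Phi(V)$ inside $\d^{\f}$ rather than the full polydisk. This only makes isometric extension easier, since the supremum defining $\|\pi_f\|$ is then taken over a smaller set while still equalling the norm on $V'$ by the maximum principle. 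Finally, to keep the ambient space manageable I would replace $\f$ by a countable subfamily, extracted by a normal-families argument from the weak-$*$ compact unit ball, that still separates points and tangents and still contains enough norming functions, so that $G$ may be taken inside $\d^{\mathbb{N}}$; the further passage to finitely many coordinates, when it is possible, is exactly the phenomenon illustrated by the diamond $\Delta$ for $V=T$. The hard part throughout is this interplay between forcing $\Phi$ to be proper and keeping the ambient pseudoconvex set small enough to preserve norms, and this is where I expect the bulk of the analytic work to lie.
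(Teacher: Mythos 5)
There is a genuine gap, and it is structural rather than technical. The theorem requires a set $G$ that is connected and pseudoconvex \emph{in $\mathbb{C}^n$} and that contains $V$ itself as an analytic subvariety --- that is what ``Cartan pair,'' and hence ``np pair,'' means in this paper. Your $G$ is the infinite polydisk $\mathbb{D}^{\mathcal F}$, and even after your countable reduction it lives in $\mathbb{D}^{\mathbb{N}}$; you concede the passage to finitely many coordinates only ``when it is possible,'' but the theorem carries no such caveat, and your construction genuinely cannot descend to finite dimensions, since making \emph{every} unit-norm function a coordinate projection is exactly what forces infinitely many coordinates. Three further steps fail on their own terms. First, the identification of $V$ with $\Phi(V)$ breaks down whenever $H^\infty(V)$ fails to separate points or tangents (e.g.\ $V$ containing an affine copy of $\mathbb{C}$); passing to the ``separated quotient'' answers a different question, because the theorem demands $G\supset V$, not $G\supset$ a quotient of $V$. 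Second, properness of $\Phi$ requires that along every sequence leaving $V$ some $f\in\mathcal F$ have modulus tending to $1$, i.e.\ peak-type behavior of $H^\infty(V)$ at every boundary approach, which is false in general; and your fallback of replacing $\Omega$ by a bounded pseudoconvex neighborhood of $V$ is unavailable when $V$ itself is unbounded (take $\Omega=\mathbb{D}\times\mathbb{C}$ and $V$ the graph of $z\mapsto 1/(1-z)$, which is biholomorphic to $\mathbb{D}$ with a rich $H^\infty$ but contained in no bounded set), while a ``pseudoconvex hull of $\Phi(V)$ inside $\mathbb{D}^{\mathcal F}$'' is not a defined or controlled object. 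Third, the tools you invoke to give $\Phi(V)$ the structure of an analytic subvariety --- Cartan extension and coherence --- are finite-dimensional theorems and do not apply in $\mathbb{D}^{\mathbb{N}}$.

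The actual proof goes in the opposite direction: it \emph{shrinks} $\Omega$ rather than blowing $V$ up into a universal polydisk. One fixes $a\in V$, uses a Michael-selection argument to produce a continuous extension operator $S:\hol(V)\to\hol(\Omega)$ with $S(\ess_a(V))\subset\ess_a(G)$ for some subdomain $G$ (Lemma \ref{prev}), exploits the compactness of $\ess_a(V)$ in $\mathcal O(V)$ (Lemma \ref{lemmont}), and defines
\[
D_1 := \Bigl(\bigcap\{z\in G:\ |S(f)(z)|<1\ \ \forall f\in \ess_a(V)\}\Bigr)^{\circ},
\]
showing $V\subset D_1$ by compactness plus the maximum principle, exactly as in the proof of Lemma \ref{lemc7}; the connected component $D$ of $D_1$ containing $V$ is pseudoconvex, and the M\"obius normalization of Lemma \ref{exist.tran} reduces arbitrary Schur functions to those vanishing at $a$. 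This yields $V\subset G\subset\Omega\subset\mathbb{C}^n$, i.e.\ the finite-dimensional pair the statement asks for and the one object your construction never reaches. The heart of the matter --- which your proposal inverts --- is that norm preservation is bought by making $G$ \emph{small} (so that all extensions simultaneously stay in the Schur class on $G$) while compactness of $\ess_a(V)$ guarantees $G$ still contains a full neighborhood of $V$; it is not bought by making the ambient space large enough to contain every function as a coordinate.
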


The first goal of this note is to extend Theorem \ref{thma1} to the matrix and operator-valued case.

\begin{defin}
	Let $G$ be a domain of holomorphy, and $V$ an analytic subvariety of $G$.  We say $(G,V)$ is a {\em complete
		np pair} if for every separable Hilbert space $\h$ and every bounded holomorphic function $f: V \to B(\h)$ 
	there is a bounded holomorphic extension $F: G \to B(\h)$ such that $\| F \|_G = \| f \|_V$.
\end{defin}

\begin{thm}\label{exist.lem.10}
	If $(\Omega,V)$ is a Cartan pair, and $V$ is connected, then their exists $G$ such that $(G,V)$ is a complete np-pair.
\end{thm}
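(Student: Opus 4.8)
The plan is to reduce the operator-valued extension problem to the scalar existence theorem (Theorem~\ref{thma1}) by \emph{linearising} the operator norm with the help of two auxiliary balls, and then to recover a structured (bilinear) extension from a general scalar one. First I would reduce to the matrix-valued case: since $\h$ is separable, a contractive holomorphic $f\colon V\to B(\h)$ is an increasing limit of its compressions $P_n f P_n$ to finite-dimensional subspaces, so after a normal-families argument it suffices to extend contractive $f\colon V\to M_k$ (and, for the operator case, to run the whole argument once over the ball of $\ell^2$). Writing $\mathbb{B}_k$ for the unit ball of $\c^k$, the key observation is that for $A\in M_k$ one has $\sup_{\xi,\zeta\in\mathbb{B}_k}|\xi^{t}A\zeta|=\|A\|$, because $\xi^{t}A\zeta=\langle A\zeta,\overline{\xi}\rangle$ and $\overline{\xi}$ ranges over $\mathbb{B}_k$ as $\xi$ does. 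Hence the \emph{scalar} symbol
\[
g(v,\xi,\zeta)\;=\;\xi^{t}f(v)\zeta ,\qquad (v,\xi,\zeta)\in V\times\mathbb{B}_k\times\mathbb{B}_k,
\]
is holomorphic and satisfies $\|g\|=\|f\|_V$.

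Next I would apply Theorem~\ref{thma1} to the Cartan pair $(\Omega\times\mathbb{B}_k\times\mathbb{B}_k,\;V\times\mathbb{B}_k\times\mathbb{B}_k)$, producing a domain of holomorphy $H$ and a scalar isometric extension $\tilde g$ of $g$ with $\|\tilde g\|_H=\|g\|$. The point is that an \emph{arbitrary} scalar extension can be projected onto the bilinear part without increasing the norm: for $w$ with $\{w\}\times\mathbb{B}_k\times\mathbb{B}_k\subseteq H$ define $F(w)\in M_k$ by
\[
\xi^{t}F(w)\zeta\;=\;\frac{1}{(2\pi)^2}\int_0^{2\pi}\!\!\int_0^{2\pi}\tilde g\!\left(w,e^{i\theta}\xi,e^{i\phi}\zeta\right)e^{-i\theta-i\phi}\,d\theta\,d\phi ,
\]
the degree-$(1,1)$ Fourier coefficient in $(\xi,\zeta)$. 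Since $e^{i\theta}\xi,e^{i\phi}\zeta$ stay in $\mathbb{B}_k$, the integrand is bounded by $\|\tilde g\|_H$, so $\|F(w)\|=\sup_{\xi,\zeta\in\mathbb{B}_k}|\xi^{t}F(w)\zeta|\le\|\tilde g\|_H$; and on $V$ the function $\tilde g=g$ is already bilinear, whence $F|_V=f$. Thus $F$ is a holomorphic extension of $f$ with $\|F\|\le\|\tilde g\|_H=\|f\|_V\le\|F\|$, giving the desired isometry.

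It remains to package this into a single domain $G$. Here is where connectedness of $V$ enters. Let $G_0=\{w:\{w\}\times\mathbb{B}_k\times\mathbb{B}_k\subseteq H\}$ (or its interior, taken over radii $r\uparrow 1$); then $V\subseteq G_0$, and because $V$ is connected it lies in a single connected component $G$ of $G_0$. A connected component of a domain of holomorphy is again a domain of holomorphy, $V$ is an analytic subvariety of it, and the extension $F|_G$ furnishes the required isometric extension. If $V$ were disconnected its image could meet several components of $G_0$, and no single connected $G$ would suffice; this is consistent with the genuine matrix Nevanlinna--Pick obstruction that appears when one insists on interpolating prescribed operator values across a connected domain, so connectedness is essential and not merely technical.

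The main obstacle I expect is producing \emph{one} domain $G$ that works simultaneously for all Hilbert spaces $\h$. Running the construction separately for each $k$ yields slice-domains $G=G(k)$ that a priori depend on $k$, whereas the definition of a complete np pair demands a fixed $G$. The cleanest resolution is to carry out the reduction once with the infinite ball $\mathbb{B}_{\ell^2}$ and the symbol $g(v,\xi,\zeta)=\sum_{i,j}\xi_i f_{ij}(v)\zeta_j$, which requires a version of Theorem~\ref{thma1} valid for the (infinite-dimensional) pair $(\Omega\times\mathbb{B}_{\ell^2}\times\mathbb{B}_{\ell^2},V\times\mathbb{B}_{\ell^2}\times\mathbb{B}_{\ell^2})$; failing that, one must extract the matrix-level domains uniformly in $k$ and pass to the limit by a normal-families compactness argument. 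The secondary technical points — verifying that the slice-set $G_0$ is pseudoconvex (a Hartogs/pseudoconvexity computation) and that $V$ sits inside it as an honest analytic subvariety — are routine once the uniform-in-$\h$ difficulty is handled.
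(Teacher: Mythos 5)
Your linearization-plus-Fourier-projection idea is attractive, and the bilinear bookkeeping in it is sound (extracting the bidegree-$(1,1)$ coefficient does recover a matrix-valued extension without increasing the norm \emph{where it applies}), but the proof has a genuine gap at exactly the point you flag and then dismiss: within your scheme there is no way to manufacture a single open $G\supseteq V$ on which the bound is isometric and which is independent of the Hilbert space. First, Theorem \ref{thma1} is a theorem about Cartan pairs in $\C^n$; applying it to $(\Omega\times\mathbb{B}_{\ell^2}\times\mathbb{B}_{\ell^2},\,V\times\mathbb{B}_{\ell^2}\times\mathbb{B}_{\ell^2})$ is not legitimate, and its proof in \cite{akmc21} does not carry over to infinite dimensions --- indeed the paper notes explicitly that Montel's theorem fails for functions valued in infinite-dimensional spaces, which also blocks your fallback ``normal-families limit over $k$'': the domains $G(k)$ produced for different $k$ may shrink as $k\to\infty$, so $\bigcap_k G(k)$ can fail to be a neighborhood of $V$, and the WOT-Montel substitute (Lemma \ref{mont}) only helps on a \emph{fixed} domain. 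Second, even for fixed $k$ the slice set $G_0=\{w:\{w\}\times\mathbb{B}_k\times\mathbb{B}_k\subseteq H\}$ need not be open, and $V$ need not lie in its interior, since $H$ may pinch in the ball directions as $w$ leaves $V$; your $r\uparrow 1$ repair does give open sets $G_r\supseteq V$, but the Fourier estimate there is only $\|F\|_{G_r}\le \|f\|_V/r^2$, so isometry is lost on every fixed open set. Repairing this in the paper's style --- taking the interior of $\{w:\|F_f(w)\|\le\|f\|_V\ \forall f\}$ and showing $V$ lies inside it by a limiting argument --- requires the assignment $f\mapsto\tilde g$ to be continuous, so that compactness of the Schur class of $V$ (Lemma \ref{lemmont}) and the maximum principle can be invoked; the black-box np property from Theorem \ref{thma1} provides no continuous, let alone canonical, choice of $\tilde g$.

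The paper resolves both difficulties in one stroke by making the scalar extension \emph{linear}, rather than by symmetrizing a nonlinear one after the fact: Lemma \ref{prev} gives a continuous extension operator (via Michael's selection theorem), orthogonal projection in a weighted Bergman space converts it into a continuous \emph{linear} operator $E$, and the continuity of $E$ together with compactness of $\ess_a(V)$ is precisely what shows $V$ lies in the interior of the shrunken pseudoconvex domain $D$ (Lemma \ref{lemc7}). Proposition \ref{propc6} then upgrades the scalar linear isometric $E$ to the operator-valued statement by applying it entrywise to $\langle f(\cdot)h,k\rangle$: linearity in $h$ makes $F(z)$ a well-defined operator, contractivity of $E$ on the scalar Schur class gives $\|F(z)\|\le 1$, and Lemma \ref{exist.tran} (ball automorphisms plus WOT-Montel) reduces to the case $f(a)=0$. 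Morally, your Fourier trick and Proposition \ref{propc6} exploit the same principle --- linearity is what lets scalar extensions govern operator-valued ones --- but the linearity must be built into a single extension operator, uniformly over the whole Schur class and all of $\h$ at once, not extracted per function and per dimension from extensions living on varying domains.
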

We prove Theorem \ref{exist.lem.10} in Section \ref{secc}.
Since any Stein manifold embeds properly as a submanifold into $\mathbb C^n$ for some $n$, the theorem carries over to the case when $V$ is a subvariety of a Stein manifold.
Notice that if $V$ is not connected, the characteristic function of any component cannot be isometrically extended to any connected domain containing it, so the connectedness condition is necessary.

We do not know the answer to the following question.
\begin{ques}
\label{q1}
If $(G,V)$ is an np pair  is it always a complete np pair?
\end{ques}

In Section \ref{secd} we study the question for a particular type of $V$, namely one that looks like two crossed discs.
\begin{thm}\label{twocrossed}
	Let $\T$ be the union of  two analytic disks, which intersect at one point $a$.
	\begin{equation}
	\label{eqa2}
	D_1 = \psi_1({\mathbb D}), D_2 = \psi_2({\mathbb D}),  \T = D_1 \cup D_2 , D_1 \cap D_2 = a = \psi_1(0) = \psi_2(0) .
	\end{equation}
	Let $(G,\T)$ be a Cartan pair. 
	Then the following are equivalent:
	
	(i) There is a map $\alpha : {\mathbb T}^2 \to H^\infty_1(G)$
	so that 
	\begin{eqnarray*}
		\alpha(\tau_1, \tau_2) (\psi_1(z) ) & =&  \tau_1 z \\
		\alpha(\tau_1, \tau_2) (\psi_2(z) ) & =&  \tau_2 z.
	\end{eqnarray*}
	
	(ii) $(G,\T)$ is an np pair.
	
	(iii) $(G,\T)$ is a complete np pair.
\end{thm}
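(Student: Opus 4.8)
The plan is to prove the cycle $(\mathrm{iii})\Rightarrow(\mathrm{ii})\Rightarrow(\mathrm{i})\Rightarrow(\mathrm{iii})$. The implication $(\mathrm{iii})\Rightarrow(\mathrm{ii})$ is immediate, being the case $\h=\C$ of an operator-valued statement. For $(\mathrm{ii})\Rightarrow(\mathrm{i})$ I would, for each fixed $(\tau_1,\tau_2)\in\mathbb T^2$, consider the function $g$ on $\T$ defined by $g(\psi_1(z))=\tau_1 z$ and $g(\psi_2(z))=\tau_2 z$. The two branch values agree at the node ($g(a)=0$ from either side), so $g$ is holomorphic on $\T$, and $\|g\|_\T=\max(|\tau_1|,|\tau_2|)=1$. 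The np hypothesis then furnishes an isometric extension lying in $H^\infty_1(G)$, and I take this to be $\alpha(\tau_1,\tau_2)$. Hence the whole content of the theorem is the implication $(\mathrm{i})\Rightarrow(\mathrm{iii})$.

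To prove $(\mathrm{i})\Rightarrow(\mathrm{iii})$, fix holomorphic $f\colon\T\to B(\h)$ with $\|f\|_\T=1$ and set $f_j=f\circ\psi_j$, so $f_1,f_2\colon\mathbb D\to B(\h)$ are contractions with $f_1(0)=f_2(0)=f(a)=:C_0$. The idea is to use $\{\alpha(\tau_1,\tau_2)\}$ as a family of test functions. I would first produce an intrinsic transfer-function realization of $f$ on $\T$ using the scalar restrictions $g_{\tau_1,\tau_2}:=\alpha(\tau_1,\tau_2)|_\T$, namely a contractive colligation $\left(\begin{smallmatrix}A&B\\C&D\end{smallmatrix}\right)$ on $\h\oplus\mathcal E$, with $\mathcal E$ a direct integral of test-function spaces over $\mathbb T^2$, such that
\[
f(\zeta)=A+B\,\Delta(\zeta)\bigl(I-D\,\Delta(\zeta)\bigr)^{-1}C,\qquad \Delta(\zeta)=\int^{\oplus}_{\mathbb T^2} g_{\tau_1,\tau_2}(\zeta)\,dE(\tau_1,\tau_2).
\]
Then I would define $F$ on $G$ by the \emph{same} formula with each $g_{\tau_1,\tau_2}(\zeta)$ replaced by $\alpha(\tau_1,\tau_2)(x)$. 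This $F$ is holomorphic, restricts to $f$ on $\T$ (because $\alpha(\tau_1,\tau_2)$ restricts to $g_{\tau_1,\tau_2}$), and is contractive, since the colligation is a contraction and $|\alpha(\tau_1,\tau_2)(x)|\le 1$; indeed $|\alpha(\tau_1,\tau_2)(x)|<1$ on $G$ by the maximum principle, which also guarantees that the resolvent is defined.

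The main obstacle is the intrinsic step: that every contractive $f$ on the cross $\T$ admits such a realization over $\{g_{\tau_1,\tau_2}\}$, equivalently an Agler decomposition
\[
I-f(s)f(t)^*=\int_{\mathbb T^2}\bigl(1-g_{\tau_1,\tau_2}(s)\overline{g_{\tau_1,\tau_2}(t)}\bigr)\,d\Gamma(\tau_1,\tau_2;s,t)
\]
with $\Gamma$ a positive $B(\h)$-kernel-valued measure. This is where the geometry of two disks meeting at one point is decisive. On each branch the requirement collapses to the classical factorization $I-f_j(z)f_j(z')^*=(1-z\bar z')K_j(z,z')$ with $K_j\succeq 0$; the real content is the cross term, where for $s=\psi_1(z)$, $t=\psi_2(w)$ one must recover $I-f_1(z)f_2(w)^*$ from the integrand $1-\tau_1\bar\tau_2\,z\bar w$. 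The constant terms match precisely because $f_1(0)=f_2(0)=C_0$ forces both sides to equal $I-C_0C_0^*$ at the node, and I would build $\Gamma$ by distributing $K_1,K_2$ over the circle of directions $\tau_1\bar\tau_2\in\mathbb T$. It is essential that the full continuum be available: a realization using only the two test functions $(z,0)$ and $(0,w)$ is impossible, because amalgamating the two branch colligations into a single contraction forces the shared corner to satisfy $\|C_0\|=1$, whereas in general $\|C_0\|<1$.

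Two features then close the argument. First, the decomposition above is carried out verbatim for $B(\h)$-valued $f$; the hypothesis (i), which asserts only that the \emph{scalar} functions $g_{\tau_1,\tau_2}$ extend, therefore already delivers extensions of arbitrary operator-valued $f$. This is exactly the mechanism forcing an np pair to be a complete np pair. Second, the extension $F$ is automatically isometric: $F|_\T=f$ gives $\|F\|_G\ge\|f\|_\T=1$, while the realization gives $\|F\|_G\le1$, so $\|F\|_G=\|f\|_\T$ (the general normalization following by scaling, and the degenerate case $\|C_0\|=1$ dispatched directly). Everything except the construction of the Agler decomposition on $\T$ is bookkeeping with transfer functions and the maximum principle, so I expect that decomposition to be the crux.
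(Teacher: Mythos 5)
Your outer reductions are fine and match the paper: (iii)$\Rightarrow$(ii) is trivial, (ii)$\Rightarrow$(i) is the observation that $[\tau_1 z,\tau_2 z]$ has norm $1$ on $\T$, and the paper likewise runs everything through the normalization $f(a)=0$ (its Lemma \ref{exist.tran}). But the step you yourself identify as the crux --- that every $f\in\ess(\T,B(\h))$ admits an Agler decomposition over the family $\{g_{\tau_1,\tau_2}\}$ --- is asserted rather than proved, and the construction you hint at (``distributing $K_1,K_2$ over the circle of directions $\omega=\tau_1\bar\tau_2$'') provably fails. Take $f$ to be the test function itself: $f_1(z)=z$, $f_2(w)=w$, so $K_1=K_2=1$. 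If the diagonal blocks are spread out as $d\gamma_{11}(\omega)=K_1\,dm(\omega)$, $d\gamma_{22}(\omega)=K_2\,dm(\omega)$ with $m$ normalized Lebesgue measure on $\mathbb T$, then pointwise block positivity forces the cross density to satisfy $|c(\omega)(z,w)|\le 1$ a.e., while the cross equation $1-z\bar w=\int_{\mathbb T}(1-\omega z\bar w)\,d\gamma_{12}(\omega)$ requires the moments $\int c\,dm=1$ and $\int\omega\,c\,dm=1$; the second forces $c(\omega)=\bar\omega$ a.e., contradicting the first. So the measure $\Gamma$ must be coupled to the cross data $h_1(z)\overline{h_2(w)}$ (writing $f_j(\lambda)=\lambda h_j(\lambda)$ after normalizing), i.e.\ you need a Herglotz-type representation of $h_1(z)\overline{h_2(w)}$ with attendant block positivity. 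In the scalar case that is essentially how \cite{akmc21} argued, via the Herglotz representation theorem, and the paper explicitly notes that this route does not survive the passage to operator values; your proposal would have to supply the operator-valued decomposition, which is as hard as the theorem itself. The paper's actual route avoids any decomposition: it realizes $\lambda\mapsto\varphi(\psi_j(\lambda))/\lambda$ by unitary colligations $U_1,U_2$ amalgamated on one space $\M=\h\oplus\K$, reduces (normalizing $U_1=I$) to extending the $B(\M)$-valued function $\psi_1(\lambda)\mapsto\lambda U_1$, $\psi_2(\lambda)\mapsto\lambda U_2$, approximates $U_2$ by diagonalizable unitaries $W_nD_nW_n^*$, extends entrywise along the diagonal using the scalar functions $\alpha(1,\tau_k)$ from hypothesis (i), and extracts a limit with the WOT Montel theorem (Lemma \ref{mont}).

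There is a second, independent gap: your direct-integral realization $\Delta(\zeta)=\int^{\oplus}_{\mathbb T^2}g_{\tau_1,\tau_2}(\zeta)\,dE$ and the substitution $g_{\tau_1,\tau_2}\mapsto\alpha(\tau_1,\tau_2)$ require $\tau\mapsto\alpha(\tau)(x)$ to be measurable, but hypothesis (i) supplies only a bare choice function with no regularity in $(\tau_1,\tau_2)$ whatsoever. One could hope to repair this by approximating $\Gamma$ by finitely atomic measures, but it needs to be said; note that the paper's argument is structured precisely so that only countably many of the scalar extensions are ever used at once, so no measurable selection is needed. (A smaller bookkeeping point: $\|\Delta(x)\|$ can equal $1$ on $G$ even though each $|\alpha(\tau)(x)|<1$, so the Neumann series for $(I-D\Delta(x))^{-1}$ needs a separate device, e.g.\ first extending $rf$ for $r<1$ and again invoking a normal-families limit, exactly as the paper does with its approximating sequence $\Phi_n$.)
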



We shall let $H^\i(V)$ denote the algebra of bounded holomorphic functions on $V$ equipped with the supremum norm.
\begin{defin}A Cartan pair $(G,V)$ is said to be a \emph{linear np pair} if there is a linear and isometric map $ H^\i(V)\to H^\i(G).$ It is a \emph{ linear np pair vanishing at $a$} if there is a linear and isometric map from the subspace of $H^\i(V)$ that vanishes at $a$ to $H^\i(G)$.
\end{defin}
The linear extension property was first studied by W. Rudin \cite{rud69}.
There is a natural connection between the linear and complete extension properties.
We show in Proposition \ref{propc6} that if $(G,V)$ is a linear np pair vanishing at some point $a$, then $(G,V)$ is a complete np pair.

\vs
{\bf Proposition \ref{propc6}.}
	Let $(\Omega,V)$ be a Cartan pair, $a\in V$, and assume that there is an isometric linear operator $$E:\ess_{a}(V)\to \ess_{a}(\Omega).$$ Then $(\Omega,V)$ is a complete np-pair.
\vs

 In \cite{aly19}  
Agler, Lykova and Young studied the symmetrized bidisc
 \[
 \mathbb G_2\ =\{(z+w, zw):\ z,w\in \d\} .\]
This is  $\c$-convex, though not convex, and
 there are np sets that are not retracts. More precisely they showed that all algebraic sets $V$ in the symmetrized bidisc that have
 the norm preserving  extension property are either retracts or are the union of two analytic discs of the form
\begin{equation}\label{sym}
\{(2\l, \l^2):\l \in \d\}\cup \{(\beta + \bar\beta \l, \l):\l \in \d\},
\end{equation} where $\beta\in \d$.
It follows from Theorem \ref{twocrossed} that for algebraic sets in $ \mathbb G_2$, the np property and the complete np property are the same.
However this cannot be deduced using a linear extension, as we shall show in Theorem \ref{G2linear} that if $V$ is as in \eqref{sym}, there is no linear isometric extension operator of the functions vanishing at a point to all of $\mathbb G_2$.
\vs

{\bf Theorem \ref{G2linear}.}
Let $\T$ be given by \eqref{sym}, and 
let $a \in \T$. There is no 
 linear isometric extension
	operator from $\ess_{a}(\T)$ to $\ess(\mathbb G_2)$.
\vs

%
%
%
%
%
%

\section{Notation}
\label{secb}

If $V$ is any set on which we can define holomorphic functions, we define the {\em Schur class} $\ess(V)$ to be the holomorphic functions from $V$ to $\overline{\D}$. If $\h$ is a Hilbert space, we let $\ess(V, B(\h))$ denote the holomorphic functions from
$V$ to $B(\h)$ that are bounded by $1$ in norm. Finally, if $a \in V$, we let $\ess_a(V)$ (resp. $\ess_a(V, B(\h))$ ) denote
the Schur functions that vanish at $a$.

We define the map  $\pi : \D^2 \to {\mathbb G}$ by
\begin{equation}
\label{eqb1}
\pi(z_1, z_2) \= (s,p) \=  (z_1 + z_2, z_1 z_2) . 
\end{equation}
Define $\Delta$ and $T$ by
\begin{eqnarray}
\label{eqb3}
\Delta &\=& \{z\in \c^2:\ |z_1|+|z_2|<1\}\\
\label{eqb2}
T &\= & \D \times \{ 0 \} \cup \{ 0 \} \times \D .
\end{eqnarray}

\section{Complete np pairs}
\label{secc}

Throughout this section we shall assume that $(\O,V)$ is a Cartan pair and that $V$ is connected.

It was proved by  Bishop \cite{bis62} and Fujimoto \cite{fu65} that if $(\O,V)$ is a Cartan pair, then every $B(\h)$-valued holomorphic function on $V$ extends to a $B(\h)$-valued holomorphic function on $\O$. With this tool in hand one could try to prove Theorem \ref{exist.lem.10} by  repeating the proof from one dimensional case. The main problem that appears here is that Montel's theorem fails for holomorphic functions with values in infinite dimensional vector spaces. There are topologies on $B(\h)$ for which a Montel-type theorem does hold and even such that $\hol(V, B(\h))$ is paracompact, but then the projection $$\hol(\Omega, B(\h))\to \hol(V, B(\h))$$ is not not open, so Michael's selection theorem cannot be used. 
So we shall adopt a new strategy,   which is  to  establish a link between complete and linear norm preserving extensions.

Recall that $f:\Omega \to B(\h)$ is holomorphic if and only if it is weakly holomorphic, i.e. $\Lambda(f)$ is a holomorphic function for any $\Lambda\in B(\h)'$. If $f$ is locally bounded, a weaker condition needs to be verified for a function to be holomorphic: 
\begin{lem}\label{hol}
If $G$ is  open, and $f: G \to B(\h)$ is locally bounded, then $f$ is holomorphic if and only if $z\mapsto \langle f(z)h, k\rangle$ is a holomorphic function.
\end{lem}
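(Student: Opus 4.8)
The plan is to prove both implications, with the forward direction being immediate and all the content residing in the converse. For the forward direction, if $f$ is holomorphic then by the characterization recalled above it is weakly holomorphic, so composing with the bounded functional $T\mapsto \langle Th,k\rangle$ shows that $z\mapsto \langle f(z)h,k\rangle$ is holomorphic for every $h,k\in\h$.

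For the converse, assume $f$ is locally bounded and that each $z\mapsto\langle f(z)h,k\rangle$ is holomorphic. I would first treat a single complex variable and then globalize via Hartogs' theorem. Fix $z_0\in G$ and a coordinate direction $e_j$, choose $\rho>0$ so that the closed disc $\{z_0+\zeta e_j:\ |\zeta|\le\rho\}$ lies in $G$ and $\|f\|\le M$ there, and set $g(\zeta)=f(z_0+\zeta e_j)$. The idea is to reconstruct $g$ as a norm-convergent power series whose coefficients are defined weakly: for each $n\ge 0$ the sesquilinear form $(h,k)\mapsto \frac{1}{2\pi i}\oint_{|\zeta|=\rho}\zeta^{-n-1}\langle g(\zeta)h,k\rangle\,d\zeta$ is bounded, by the scalar Cauchy estimate, by $M\rho^{-n}\|h\|\,\|k\|$, so it represents a unique operator $a_n\in B(\h)$ with $\|a_n\|\le M\rho^{-n}$. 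Consequently $\sum_n a_n\zeta^n$ converges in operator norm for $|\zeta|<\rho$ and defines a norm-holomorphic $B(\h)$-valued function $\tilde g$. Since each scalar function $\langle g(\zeta)h,k\rangle$ equals its own Taylor series $\sum_n \langle a_n h,k\rangle\zeta^n$, we get $\langle\tilde g(\zeta)h,k\rangle=\langle g(\zeta)h,k\rangle$ for all $h,k$, whence $\tilde g=g$ and $g$ is norm-holomorphic.

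To finish, I would observe that for any $\Lambda\in B(\h)'$ the one-variable step makes $\zeta\mapsto\Lambda(g(\zeta))$ holomorphic, so $\Lambda\circ f$ is holomorphic in each coordinate variable separately; Hartogs' theorem (no continuity hypothesis needed) then yields that $\Lambda\circ f$ is holomorphic on $G$. Thus $f$ is weakly holomorphic and hence holomorphic. The main obstacle, and the only place where local boundedness is genuinely used, is the upgrade from the rank-one weak holomorphy to norm-holomorphy on discs: it is precisely the uniform bound $\|a_n\|\le M\rho^{-n}$ that forces the formally defined Cauchy coefficients to be bounded operators and makes the series converge in operator norm. Without local boundedness these coefficients need not define elements of $B(\h)$ at all, and the argument breaks down.
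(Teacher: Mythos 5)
Your proof is correct and follows essentially the same route the paper intends: the paper's own ``proof'' is a one-line citation to \cite[Thm.~6.1]{kr20} with the remark that one argues as in the proof that weakly holomorphic functions are holomorphic, and your argument --- operator-valued Cauchy coefficients $a_n$ obtained from bounded sesquilinear forms with $\|a_n\|\le M\rho^{-n}$, norm-convergent power series on discs, then Hartogs plus the weak-holomorphy criterion to globalize --- is precisely that standard argument written out in full. One minor quibble with your closing comment: local boundedness is in fact automatic from the holomorphy (hence continuity) of $z\mapsto\langle f(z)h,k\rangle$ via two applications of the uniform boundedness principle, so the hypothesis is redundant rather than indispensable, but this does not affect the validity of your proof.
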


\begin{proof}
This can be proved in a similar way to proving that weakly holomorphic functions are holomorphic. See \cite[Thm. 6.1]{kr20} for a formal proof.
\end{proof}

The Montel theorem fails in $\hol(\Omega, B(\h))$. However, it is true if we equip $B(\h)$ with the WOT topology:
\begin{lem}\label{mont}
	Let $(f_n)\subset \ess(\Omega,B(\h))$. Then there is a subsequence $(f_{n_k})$ and $f\in \ess(\Omega, B(\h))$ such that $\langle f_{n_k}(z) h, k\rangle$ converges to $\langle f(z) h,k\rangle$ locally uniformly on $\Omega$ for each $h,k\in \h$.
\end{lem}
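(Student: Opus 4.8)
The plan is to reduce the operator-valued statement to the classical scalar Montel theorem by passing to matrix entries, and then to reassemble the entrywise limits into a single Schur-class operator function, with the uniform bound $\|f_n(z)\| \le 1$ doing the essential work. First I would fix a countable orthonormal basis $(e_i)$ of $\h$, which exists by separability. For each pair $(i,j)$ the scalar functions $z \mapsto \langle f_n(z) e_i, e_j \rangle$ are holomorphic on $\Omega$ and satisfy $|\langle f_n(z) e_i, e_j\rangle| \le \|f_n(z)\| \le 1$, so they form a normal family by the classical Montel theorem. A diagonal argument over the countable set of index pairs then produces a single subsequence $(f_{n_k})$ along which $\langle f_{n_k}(z) e_i, e_j\rangle$ converges locally uniformly on $\Omega$, say to a holomorphic function $g_{ij}(z)$, for every $i,j$.

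Next I would reconstruct a limit operator for each fixed $z$. For finite linear combinations $h = \sum_i a_i e_i$ and $k = \sum_j b_j e_j$, passing to the limit along the subsequence in the identity $\langle f_{n_k}(z) h, k\rangle = \sum_{i,j} a_i \overline{b_j}\, \langle f_{n_k}(z) e_i, e_j\rangle$ yields
\begin{equation*}
\Big| \sum_{i,j} a_i \overline{b_j}\, g_{ij}(z) \Big| \ \le\ \|h\|\,\|k\|,
\end{equation*}
since each approximating term is bounded by $\|h\|\,\|k\|$. Thus the sesquilinear form determined by the matrix $(g_{ij}(z))$ on the dense subspace of finite combinations is bounded by $1$; it therefore extends to a bounded sesquilinear form on $\h\times\h$ and determines a unique operator $f(z) \in B(\h)$ with $\|f(z)\| \le 1$ and $\langle f(z) e_i, e_j\rangle = g_{ij}(z)$.

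Finally I would upgrade the convergence from basis vectors to arbitrary $h, k \in \h$ and deduce holomorphy. Given $h, k$ and $\varepsilon > 0$, I would choose finite combinations $h', k'$ within $\varepsilon$ of $h, k$; because $\|f_{n_k}(z)\| \le 1$ and $\|f(z)\| \le 1$ uniformly in $z$ and in the subsequence index, the approximation errors such as $|\langle f_{n_k}(z)(h - h'), k\rangle|$ are bounded by $C\varepsilon$ independently of $z$ and the index, while $\langle f_{n_k}(z) h', k'\rangle \to \langle f(z) h', k'\rangle$ locally uniformly as a finite combination of the $g_{ij}$. An $\varepsilon/3$ argument then gives $\langle f_{n_k}(z) h, k\rangle \to \langle f(z) h, k\rangle$ locally uniformly for all $h, k$. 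As a locally uniform limit of holomorphic functions, each $z \mapsto \langle f(z) h, k\rangle$ is holomorphic, so $f$ is locally bounded (indeed $\|f(z)\|\le 1$) and holomorphic by Lemma \ref{hol}, whence $f \in \ess(\Omega, B(\h))$.

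I expect the genuine content to lie in the two places where the uniform norm bound is indispensable: without $\|f_n\| \le 1$ the entrywise limits $(g_{ij}(z))$ need not assemble into a bounded operator at all, and the passage from convergence on a countable dense set to locally uniform convergence for every $h, k$ would break down, since the approximation errors could not be controlled uniformly in $z$ and the index. The Schur-class hypothesis is thus exactly what substitutes for the (false) operator-norm Montel theorem noted in the discussion preceding the lemma.
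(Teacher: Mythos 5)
Your proposal is correct and follows essentially the same route as the paper, whose proof is just a two-line sketch of exactly this argument: apply the scalar Montel theorem to the matrix entries $\langle f_n(z)e_i, e_j\rangle$ with respect to an orthonormal basis, diagonalize over the countably many index pairs, and reassemble. What the paper dismisses as ``elementary'' --- the bounded sesquilinear form giving $\|f(z)\|\le 1$, the $\varepsilon/3$ upgrade from basis vectors to arbitrary $h,k$, and the appeal to Lemma \ref{hol} for holomorphy of $f$ --- is precisely what you supply, and you supply it correctly.
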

\begin{proof}
	For an orthonormal basis $e_i$ we apply the regular Montel theorem
	to $\langle f_n(z) e_i, e_j \rangle$, and then using a Cantor diagonal argument we end up with $f$. 
	
	It is elementary to see that $f$ satisfies desired properties.
\end{proof}
For a final proof we need a few more preparatory results.
\begin{lem}\label{exist.tran}
	 Fix a point $a \in V$.
	Then $(\Omega,V)$ is completely norm preserving if and only if each $f \in \ess_{a}(V,B(\h))$ has an extension to an element $F \in \ess(\Omega, B(\h))$.
\end{lem}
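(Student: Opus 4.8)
The forward implication is just a matter of unwinding the definitions. If $(\Omega,V)$ is completely norm preserving and $f\in\ess_a(V,B(\h))$, then $\norm{f}_V\le 1$, so the norm preserving extension $F$ furnished by the complete np property satisfies $\norm{F}_\Omega=\norm{f}_V\le 1$, whence $F\in\ess(\Omega,B(\h))$. (We do not even need $f(a)=0$ for this.)

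For the reverse implication, suppose every $f\in\ess_a(V,B(\h))$ extends into $\ess(\Omega,B(\h))$, and let $g\colon V\to B(\h)$ be an arbitrary bounded holomorphic function. After dividing by $\norm{g}_V$ (the case $g\equiv 0$ being trivial) I may assume $\norm{g}_V=1$ and must produce an extension $G$ with $\norm{G}_\Omega=1$. The obstruction to applying the hypothesis directly is that $g$ need not vanish at $a$, and $g(a)$ may even have norm one. The plan is first to pass to $g_r:=rg$ for $r\in(0,1)$, so that $A:=g_r(a)$ is a strict contraction. Since $\norm{A}<1$, the operator $I-A^*X$ is invertible for every $X$ in the closed unit ball of $B(\h)$, so the Möbius transformation
$$\phi_A(X)=(I-AA^*)^{-1/2}(X-A)(I-A^*X)^{-1}(I-A^*A)^{1/2}$$
is holomorphic on a neighborhood of the closed ball, maps the closed ball into itself, sends $A$ to $0$, and has a holomorphic inverse $\phi_A^{-1}$ that likewise maps the closed ball into itself. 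Consequently $\phi_A\circ g_r\in\ess_a(V,B(\h))$; by hypothesis it extends to some $\tilde F_r\in\ess(\Omega,B(\h))$, and then $F_r:=\phi_A^{-1}\circ\tilde F_r$ lies in $\ess(\Omega,B(\h))$ and restricts to $g_r$ on $V$.

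It remains to remove the factor $r$. Choosing a sequence $r_k\uparrow 1$ and applying the WOT Montel theorem (Lemma \ref{mont}) to $(F_{r_k})\subset\ess(\Omega,B(\h))$, I would extract a subsequence and a limit $G\in\ess(\Omega,B(\h))$ with $\la F_{r_k}(z)h,k'\ra\to\la G(z)h,k'\ra$ locally uniformly for all $h,k'\in\h$. On $V$ we have $F_{r_k}(z)=r_k\,g(z)$, so $\la G(z)h,k'\ra=\la g(z)h,k'\ra$ for every $h,k'$ and every $z\in V$; hence $G|_V=g$. Since membership $G\in\ess(\Omega,B(\h))$ gives $\norm{G}_\Omega\le 1$, while $G|_V=g$ forces $\norm{G}_\Omega\ge\norm{g}_V=1$, I conclude $\norm{G}_\Omega=1=\norm{g}_V$, as required.

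I expect the main obstacle to be the boundary case $\norm{g(a)}=1$: there the operator Möbius map is not defined on the closed ball and cannot be used directly, and the scalar maximum-principle argument (which would force $g$ to be a constant unitary-valued direction) has no clean operator analogue. The device of shrinking to $g_r$ and recovering $g$ through the WOT–Montel limit is precisely what sidesteps this difficulty, trading the maximum principle for a normal-families argument; the one computation that must be carried out with care is the verification that $\phi_A$ and $\phi_A^{-1}$ genuinely preserve the closed unit ball of $B(\h)$ when $\norm{A}<1$.
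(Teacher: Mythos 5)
Your proof is correct and takes essentially the same route as the paper: compose with the operator M\"obius automorphism of the unit ball of $B(\h)$ (Harris's transform) to reduce to a function vanishing at $a$, then handle the boundary case $\|f(a)\|=1$ by scaling $r_k \uparrow 1$ and extracting a limit extension via the WOT--Montel compactness of Lemma~\ref{mont}. The only cosmetic difference is that you scale from the outset, whereas the paper treats $\|f(a)\|<1$ directly (pulling back with $m^{-1}$) and invokes the scaling-plus-normal-families device only when $\|f(a)\|=1$.
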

\begin{proof}

If $||f(a)||<1$, there exists an automorphism $m$ of the unit ball of  $B(\h)$ such that $(m \circ f) (a)=0$ \cite{Ha73}. As $h=m \circ f \in \ess(V, B(\h))$, the assumption of the lemma implies that there exists $H \in \ess(\Omega, B(\h))$ such that $H|V=h$. But then if we define $F=m^{-1} \circ H$, $F \in \ess(\Omega, B(\h))$ and $F|V =f$.

If $||f(a)||=1$, we approximate $f$ uniformly with $f_n\in \ess(\Omega, B(\h))$ such that $||f_n(a)||<1$
(eg. $f_n = \frac{n-1}{n} f$). It follows from the previous case that there are $F_n\in \ess(\Omega, B(\h))$ that extend $f_n$. Applying Lemma~\ref{mont} to $F_n$ we find $F\in \ess (\Omega, B(\h))$ that clearly extends $f$.
\end{proof}

\begin{lem} \cite[Lem. 3.3]{akmc21}
\label{lemmont}
	If $(\Omega,V)$ is a Cartan pair and $a\in V$, then $\ess_a(V)$ is a compact subset of
	$\mathcal O (V)$.
	\end{lem}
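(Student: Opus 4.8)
The plan is to give $\mathcal O(V)$ the compact-open topology (uniform convergence on the compact subsets of $V$) and to argue that $\ess_a(V)$ is both closed and relatively compact, whence compact. Throughout I would use that $V$, being an analytic subvariety of $\Omega\subseteq\mathbb C^n$, is locally modeled on a branched cover of a polydisc, that holomorphic functions on $V$ are locally restrictions of ambient holomorphic functions, and that $\mathcal O(V)$ is complete and metrizable in this topology.

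Closedness is the routine half. If $(f_n)\subseteq\ess_a(V)$ converges to $f$ uniformly on the compact subsets of $V$, then $f$ is again holomorphic on $V$, since a locally uniform limit of holomorphic functions on $V$ is holomorphic (completeness of $\mathcal O(V)$). The bound $|f|\le 1$ survives the limit pointwise because each $|f_n|\le 1$, and $f(a)=\lim_n f_n(a)=0$ because evaluation at $a$ is continuous in the compact-open topology. Hence $f\in\ess_a(V)$, so $\ess_a(V)$ is closed.

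The substance is relative compactness: from any sequence $(f_n)\subseteq\ess_a(V)$ I must extract a subsequence converging uniformly on compacta. This is a Montel-type statement for the variety $V$, driven by the fact that $\ess_a(V)$ is uniformly bounded by $1$. On the regular locus $V_{\mathrm{reg}}$, which is a complex manifold, the classical Montel theorem applies directly; exhausting $V_{\mathrm{reg}}$ by compact sets and running a Cantor diagonal argument produces a subsequence converging locally uniformly on $V_{\mathrm{reg}}$ to a bounded holomorphic limit $f$.

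The main obstacle is to promote this to uniform convergence on compact subsets of $V$ that meet the singular locus $V_{\mathrm{sing}}$. Here I would exploit the local structure: near a singular point, after a linear change of coordinates $V$ projects as a finite branched cover onto a polydisc in $\mathbb C^{\dim V}$, and the uniform bound together with Cauchy estimates on the covering sheets controls the functions uniformly up to the singular fibre, while the Riemann removable-singularity theorem guarantees that $f$ extends holomorphically across $V_{\mathrm{sing}}$. A cleaner route, which I would prefer when it is available, is to invoke the standard fact that for a reduced analytic variety $\mathcal O(V)$ is a Fr\'echet--Montel space; then every bounded closed subset is automatically compact, and since $\ess_a(V)$ is bounded (by $1$) and closed by the previous paragraph, compactness follows at once.
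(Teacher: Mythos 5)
Your proposal is correct in substance, but note that this paper does not actually prove the lemma: it is imported verbatim from \cite[Lem.\ 3.3]{akmc21}, so the only comparison available is with the standard argument that reference (and the literature) uses, which is essentially your second route. Your closedness half is fine: metrizability of $\mathcal O(V)$, continuity of evaluation at $a$, and preservation of the bound $|f|\le 1$ under locally uniform limits are all routine. Observe in passing that the vanishing condition at $a$ plays no role in compactness --- $\ess(V)$ itself is already compact --- and that the Cartan-pair hypothesis enters only through making $V$ a reduced complex space with countable topology. For the compactness half, your preferred route is the right one: for any reduced complex space with countable topology, $\mathcal O(V)$ with the compact-open topology is a Fr\'echet--Montel space (Grauert--Remmert, \emph{Theory of Stein Spaces}, or Kaup--Kaup), so the closed, uniformly bounded set $\ess_a(V)$ is compact at once.

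Your first, hands-on route does have a genuine soft spot, and you should be aware of it even though your fallback supersedes it. Extracting a locally uniformly convergent subsequence on $V_{\mathrm{reg}}$ via classical Montel plus diagonalization is fine, but the promotion ``up to the singular fibre'' is exactly the delicate point, and ``Cauchy estimates on the covering sheets'' do not deliver it as stated: the polydiscs one can fit inside a single sheet of the branched cover shrink to zero as one approaches the branch locus, so the resulting derivative bounds blow up and give no equicontinuity at singular points. The standard repairs are either to pass to elementary symmetric functions of the values of $f$ on the sheets (which are bounded holomorphic functions on the base polydisc, where genuine Cauchy estimates apply, and from which one recovers control of $f$ via the characteristic polynomial), or to pull back along the normalization and use the Riemann extension theorem for normal spaces. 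Since you explicitly invoke the Fr\'echet--Montel fact as the preferred argument, your proof stands; just do not present the sheet-by-sheet Cauchy estimate as if it closed the gap on its own.
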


The following result was proved in \cite[Thm. 3.5]{akmc21}.
\begin{lem}\label{prev}
	If  $a\in V$,  there is a continuous function $S:\hol(V) \to \hol(\Omega)$ such that $S(f)|V =f$ for $f\in \mathcal O(V)$. Moreover, for each $a \in V$ there is an open $G\subset \Omega$ such that $(G,V)$ is a Cartan pair and $S(\ess_{a}(V))\subset \ess_{a}(G)$.
\end{lem}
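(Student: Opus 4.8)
The plan is to prove the two assertions separately, obtaining the continuous extension operator $S$ from Michael's selection theorem, and then carving out $G$ as a sublevel set of a plurisubharmonic function built from $S$.

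For the first part I would consider the restriction map $\rho:\hol(\Omega)\to\hol(V)$, $\rho(F)=F|_V$, where both spaces carry the topology of locally uniform convergence, under which each is a Fr\'echet space. The map $\rho$ is continuous and linear, and it is surjective by Cartan's extension theorem quoted in the introduction. Hence, by the open mapping theorem for Fr\'echet spaces, $\rho$ is open. I would then pass to the set-valued map $\Phi(f)=\rho^{-1}(f)=\{F\in\hol(\Omega):F|_V=f\}$. Each value $\Phi(f)$ is a nonempty (by surjectivity) closed affine, hence convex, subset of $\hol(\Omega)$, and the openness of $\rho$ is exactly the statement that $\Phi$ is lower semicontinuous: for open $W\subset\hol(\Omega)$ the set $\{f:\Phi(f)\cap W\neq\emptyset\}=\rho(W)$ is open. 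Michael's selection theorem, applicable because $\hol(V)$ is metrizable and hence paracompact while $\hol(\Omega)$ is Fr\'echet, then yields a continuous selection $S$ of $\Phi$, that is, a continuous $S:\hol(V)\to\hol(\Omega)$ with $S(f)|_V=f$. (Note that $S$ need not, and in general will not, be linear; only continuity is claimed.)

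For the second part, recall from Lemma \ref{lemmont} that $\ess_a(V)$ is compact in $\mathcal O(V)$. Since $S$ is continuous, $\mathcal F:=S(\ess_a(V))$ is a compact subset of $\hol(\Omega)$, and hence, by Montel's theorem, a locally bounded, locally equicontinuous family. I would then set
\[
u(z)\=\sup_{F\in\mathcal F}|F(z)|\qquad(z\in\Omega).
\]
Because $\mathcal F$ is equicontinuous on compacta, $u$ is continuous, and since each $|F|$ is plurisubharmonic, $u$ is plurisubharmonic. Consequently $G:=\{z\in\Omega:u(z)<1\}$ is open and pseudoconvex, and $(G,V)$ is a Cartan pair once one checks $V\subset G$ (if necessary passing to the connected component of $G$ that contains the connected set $V$, and noting $V$ remains a subvariety of the open set $G$). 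Granting $V\subset G$, the conclusion is immediate: for $f\in\ess_a(V)$ and $F=S(f)$ we have $F(a)=f(a)=0$ and $|F(z)|\le u(z)<1$ on $G$, so $S(f)\in\ess_a(G)$.

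The crux is therefore the strict inequality $u<1$ on $V$, and this is where I expect the main obstacle. For fixed $F\in\mathcal F$ the restriction $f=F|_V$ lies in $\ess_a(V)$, so $|f|\le1$ and $f(a)=0$; by the maximum principle on the connected reduced complex space $V$, $|f|$ cannot attain the value $1$ without $f$ being a unimodular constant, which is incompatible with $f(a)=0$. Hence $|F(z)|<1$ for every $z\in V$ and every $F\in\mathcal F$. The delicate step is to upgrade this pointwise-in-$F$ strict inequality to the uniform statement $u(z)=\sup_{F}|F(z)|<1$, and here the compactness of $\mathcal F$ is decisive: for each fixed $z\in V$ the supremum defining $u(z)$ is attained by some $F\in\mathcal F$, and that attained value is $<1$ by the previous sentence. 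This yields $V\subset G$ and finishes the argument. It is worth stressing that the whole scheme collapses in the operator-valued setting precisely because $\rho$ then fails to be open and Montel's theorem fails, which is exactly why the paper abandons this route and links the complete property to the linear one.
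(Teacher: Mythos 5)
Your proposal is correct and is essentially the argument behind the paper's own treatment of this statement: the lemma is not proved in-text but quoted from \cite[Thm. 3.5]{akmc21}, and the route you take --- surjectivity of the restriction map between the Fr\'echet spaces $\hol(\Omega)$ and $\hol(V)$, openness via the open mapping theorem, Michael's selection theorem applied to the lower semicontinuous convex-valued map $f\mapsto\rho^{-1}(f)$, then compactness of $\ess_{a}(V)$ (Lemma \ref{lemmont}) together with the maximum principle on the connected variety to cut out a pseudoconvex neighborhood --- is exactly the mechanism the paper alludes to in Section \ref{secc} (where it explains that this scheme breaks in the operator-valued case because the projection is no longer open) and reuses in Lemma \ref{lemc7}. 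The only cosmetic difference is in how $G$ is carved out: you take the sublevel set of the continuous plurisubharmonic envelope $u=\sup_{F\in S(\ess_{a}(V))}|F|$, whereas the paper's parallel argument in Lemma \ref{lemc7} takes the interior of $\bigcap_{f}\{z:|E(f)(z)|<1\}$, verifies $V\subset G$ by a sequential compactness argument, and cites \cite[Prop. 4.1.7]{jj20} for pseudoconvexity; since the supremum over the compact family is attained, the two constructions produce the same set.
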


With this tool in hand we can prove the following linear extension result. Let  $L^2_h(G)$ denote 
the weighted Bergman space obtained from using the Gaussian measure. (If $G$ has finite volume, we could
just use the standard Bergman space).

%
%

\begin{lem}
\label{lemc7}
Fix $a\in V$. Then there is a pseudoconvex domain $D$, $V\subset D\subset \Omega$, and a linear isomorphic extension map $$\ess_{a}(V)\to \ess_{a}(D). $$
\end{lem}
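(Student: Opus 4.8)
The plan is to realize the required map as a norm‑preserving \emph{linear} extension operator $E:H^\infty_a(V)\to H^\infty(D)$, where $H^\infty_a(V)$ denotes the closed subspace of bounded holomorphic functions on $V$ vanishing at $a$; its restriction to the unit ball is then the desired map $\ess_a(V)\to\ess_a(D)$, and the inequality $\|E(f)\|_{\infty,D}\ge\|f\|_{\infty,V}$ (forced by the extension property, since $V\subset D$) together with contractivity makes $E$ isometric. For the domain I take $D=G$ furnished by Lemma \ref{prev}: it is pseudoconvex (being the ambient domain of the Cartan pair $(G,V)$), satisfies $V\subset D\subset\Omega$, and carries a continuous section $S:\ess_a(V)\to\ess_a(D)$ with $S(f)|_V=f$ and $\|S(f)\|_{\infty,D}\le1$.

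The device for turning the nonlinear section $S$ into a linear map is barycentric integration over the compact, convex, balanced set $K:=\ess_a(V)$ (compact by Lemma \ref{lemmont}). Suppose that to each $f$ we can attach a complex measure $\mu_f$ on $K$, \emph{linearly} in $f$, with total variation $\|\mu_f\|\le\|f\|_{\infty,V}$ and barycenter $f$, i.e. $\int_K g(v)\,d\mu_f(g)=f(v)$ for every $v\in V$. Then
$$E(f)(z)\ :=\ \int_K S(g)(z)\,d\mu_f(g)$$
does the job: it is linear in $f$; it is holomorphic on $D$ (the integrand is a continuously varying, locally bounded family of holomorphic functions, so the integral is holomorphic by Morera's theorem and Fubini); it satisfies $\|E(f)\|_{\infty,D}\le\|\mu_f\|\,\sup_{g\in K}\|S(g)\|_{\infty,D}\le\|f\|_{\infty,V}$ because $|S(g)(z)|\le1$; and on $V$ it reduces, by the barycenter condition, to $E(f)|_V=\int_K g\,d\mu_f(g)=f$. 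The existence of \emph{some} representing $\mu_f$ with $\|\mu_f\|\le\|f\|$ is immediate from Hahn--Banach: writing $\hat v(g)=g(v)\in C(K)$ and $E_0:=\spn\{\hat v:v\in V\}$, the bilinear form $\big\langle f,\sum_i c_i\hat v_i\big\rangle=\sum_i c_i f(v_i)$ on $H^\infty_a(V)\times E_0$ is contractive, since $\sup_{g\in K}|\sum_i c_i g(v_i)|\ge|\sum_i c_i f(v_i)|$ whenever $\|f\|\le1$ (take $g=f$). Equivalently, the canonical contractive operator $L_0:H^\infty_a(V)\to E_0^{\ast}$ lifts, for each individual $f$, through the restriction metric surjection $\mathcal{M}(K)=C(K)^{\ast}\to E_0^{\ast}$.

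The crux is to perform this lift \emph{linearly} without losing the sharp bound $\|\mu_f\|\le\|f\|$; this is exactly where linearity and norm preservation are in tension, and an exact isometric lifting of $L_0$ need not exist. (One tempting shortcut uses the Hilbert space $L^2_h(D)$: orthogonally projecting $S(f)$ onto the orthocomplement of $\{F:F|_V=0\}$ gives the minimal‑$L^2_h$ extension, which is automatically linear in $f$ by Hilbert‑space geometry; but this controls only the $L^2_h$‑norm, not the supremum norm, so it yields at best a bounded, not norm‑preserving, extension. The measure route above is designed to keep the sharp supremum bound.) I would resolve the tension by finite‑dimensional approximation. For each finite‑dimensional $F\subset H^\infty_a(V)$ and each $\varepsilon>0$, the finite‑dimensional subspace $L_0(F)\subset E_0^{\ast}$ admits a $(1+\varepsilon)$‑isometric linear lifting into $\mathcal{M}(K)$ (a standard Auerbach‑basis construction for finite‑dimensional subspaces of a quotient), giving a linear assignment $f\mapsto\mu_f^{F,\varepsilon}$ on $F$ with barycenter $f$ and $\|\mu_f^{F,\varepsilon}\|\le(1+\varepsilon)\|f\|$, hence a linear extension $E_{F,\varepsilon}:F\to H^\infty(D)$ with $\|E_{F,\varepsilon}(f)\|_{\infty,D}\le(1+\varepsilon)\|f\|_{\infty,V}$.

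Because $|S(g)(z)|\le1$ on all of $D$, the functions $E_{F,\varepsilon}(f)$ are uniformly bounded on $D$, so the family is normal; passing to a limit over the directed set of pairs $(F,\varepsilon)$ (an ultralimit, or a Montel‑type extraction as in Lemma \ref{mont}) yields a map $E$ defined on all of $\bigcup_F F=H^\infty_a(V)$ that stays linear, stays an extension, and now obeys $\|E(f)\|_{\infty,D}\le\|f\|_{\infty,V}$. The bookkeeping required to verify that the limit remains linear and holomorphic while the constants $1+\varepsilon$ collapse to $1$ is where the real work lies; I expect this limiting step, rather than any single algebraic identity, to be the main obstacle. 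Granting Lemma \ref{lemc7}, Proposition \ref{propc6} applied with $\Omega$ replaced by $D$ then gives Theorem \ref{exist.lem.10}.
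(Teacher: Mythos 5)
There is a genuine gap, and it is located exactly where you predicted the ``real work'' lies --- but it is worse than bookkeeping. The pivotal step is your claim that each finite-dimensional subspace $L_0(F)$ of the quotient $E_0^{*}\cong \mathcal{M}(K)/E_0^{\perp}$ admits a $(1+\varepsilon)$-isometric linear lifting to $\mathcal{M}(K)$ via ``a standard Auerbach-basis construction.'' Auerbach does not give this: if you lift each element $f_1,\dots,f_n$ of an Auerbach basis almost norm-preservingly (which is possible, since one-dimensional liftings through a metric surjection exist with norm $1+\delta$) and extend linearly, the resulting operator satisfies only $\|T\|\le (1+\delta)\,n$, a constant that grows with $\dim F$. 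Indeed, by duality a $(1+\varepsilon)$-lifting of $L_0|_F$ into $C(K)^{*}$ is the same as a $(1+\varepsilon)$-norm-preserving extension of the associated operator $E_0\to F^{*}$ to all of $C(K)$, and such extensions in general cost the projection constant of $F^{*}$ (of order $\sqrt{\dim F}$); nothing in this setting removes that obstruction. Since the constants do not collapse to $1$, your ultralimit produces at best a \emph{bounded} linear extension operator --- precisely the quality you yourself dismissed as insufficient in your parenthetical about $L^2_h$ --- and hence no map $\ess_a(V)\to\ess_a(D)$.

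The source of the difficulty is that you fixed $D=G$ at the outset and then had to fight for the sup-norm bound on all of $G$; but the lemma permits choosing $D$ \emph{after} the operator, and the paper exploits exactly this freedom, embracing the Bergman ``shortcut'' you rejected. Its proof takes $S$ from Lemma \ref{prev}, sets $E(f)=P[\iota\circ S(f)]$ where $P$ is the orthogonal projection of $L^2_h(G)$ onto $\{g: g|_V=0\}^{\perp}$ --- the minimal-norm $L^2_h$ extension, which is automatically linear and, by the Cauchy estimates, continuous from $\ess_a(V)$ into $\mathcal{O}(G)$ --- and then \emph{shrinks the domain}: it defines $D_1$ as the interior of $\{z\in G: |E(f)(z)|<1 \ \forall f\in\ess_a(V)\}$, shows $V\subset D_1$ by combining compactness of $\ess_a(V)$ (Lemma \ref{lemmont}), continuity of $E$, and the maximum principle, and takes $D$ to be the connected component of $D_1$ containing $V$, which is pseudoconvex by a cited result. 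Thus the supremum bound is obtained not by making $E$ a contraction on a prescribed domain but by restricting to the (open, $V$-containing) region where the bound holds automatically; the resulting map is then ball-to-ball, hence isometric on $\ess_a(V)$ since it is an extension operator. Your measure-theoretic scheme, by contrast, would need an almost-isometric local lifting theorem that is not available, so as written the proposal does not close.
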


\begin{proof}
	Let $G$ and $S:\hol(V)\to \hol(G)$ be as in Lemma~\ref{prev}. 
	
	 Then the inclusion $\iota : \ess_{a} (G)\subset L_h^2(G)$ is continuous; 
	 composing with $S$ we get a continuous extension operator $$\iota \circ S :\ess_{a}(V) \to L_h^2(G).$$
	 Let $P$ be the orthogonal projection from $L^2_h (G)$ onto $\{g\in L_h^2(\Omega):\ g|_V=0\}^\perp$.
	 Then \[
	 E(f) \= P [\iota \circ S (f)]
	 \]
	 is the 
 element in $L_h^2(G)$ that extends $f$ and has minimal norm. It is straightforward to see that it is linear. By the Cauchy formulas the inclusion $L_h^2(G)\subset \mathcal O(G)$ is continuous. Thus, we can construct a continuous and linear extension operator (which we will also call $E$) $$E:\ess_{a}(V) \to \mathcal O(G).$$
	
Define
	$$D_1 := \left(\bigcap\{z\in G: |E(f)(z)|<1\ \forall f\in \ess_{a}(V)\}\right)^\circ.$$ 
To see $V \subset D_1$, suppose $b \in V \setminus D_1$. Then there exist sequences $b_n \in G$ converging to $b$,
and $f_n$ in $\ess_{a}(V)$, such that $|E(f_n) (b_n)| \geq 1$.
By Lemma \ref{lemmont}, some subsequence of $(f_n)$ converges to a function $f \in \ess_{a}(V)$.
Since $E$ is continuous, $|E(f) (b)| \geq 1$. This would violate the maximum principle.

		Define $D$ to be the connected component of $D_1$ that contains $V$.
	By \cite[Prop. 4.1.7]{jj20}, 
 $D$ is pseudoconvex.
\end{proof}

\begin{prop}
\label{propc6}
	Let $(\Omega,V)$ be a Cartan pair, $a\in V$, and assume that there is an isometric linear operator $$E:\ess_{a}(V)\to \ess_{a}(\Omega).$$ Then $(\Omega,V)$ is a complete np-pair.
\end{prop}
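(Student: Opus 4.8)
The plan is to build the operator-valued extension weakly, reading it off from its matrix coefficients through the scalar map $E$. By Lemma~\ref{exist.tran} it is enough to extend an arbitrary $f\in\ess_a(V,B(\h))$ to some $F\in\ess(\Omega,B(\h))$; so I fix such an $f$, which in particular satisfies $f(a)=0$ and $\|f\|_V\le 1$. The idea is that the linearity and isometry of $E$ are exactly what is needed to promote it from scalars to operators, so that no Montel-type compactness argument (which fails in infinite dimensions) is invoked at all.

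For fixed $h,k\in\h$ set $g_{h,k}(w):=\langle f(w)h,k\rangle$. This is holomorphic on $V$, vanishes at $a$, and satisfies $\|g_{h,k}\|_V\le\|h\|\,\|k\|$, so it lies in the domain of the linear isometric extension operator $E$ (the bounded holomorphic functions on $V$ vanishing at $a$, on which $E$ is linear, isometric, and satisfies $E(g)|_V=g$). I then define $F$ by
\[
\langle F(z)h,k\rangle\;:=\;E(g_{h,k})(z),\qquad z\in\Omega .
\]
Each hypothesis on $E$ plays a distinct role. Because $E$ is $\mathbb{C}$-linear while $(h,k)\mapsto g_{h,k}$ is linear in $h$ and conjugate-linear in $k$, the assignment $(h,k)\mapsto\langle F(z)h,k\rangle$ is a sesquilinear form for each fixed $z$. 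Because $E$ is isometric,
\[
|E(g_{h,k})(z)|\;\le\;\|E(g_{h,k})\|_\Omega\;=\;\|g_{h,k}\|_V\;\le\;\|h\|\,\|k\|,
\]
so that form is bounded by $1$ and is therefore represented by a unique $F(z)\in B(\h)$ with $\|F(z)\|\le 1$. Finally, since $E(g)|_V=g$, for $z\in V$ we get $\langle F(z)h,k\rangle=g_{h,k}(z)=\langle f(z)h,k\rangle$, i.e. $F|_V=f$.

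It remains to verify holomorphy. Each matrix coefficient $z\mapsto\langle F(z)h,k\rangle=E(g_{h,k})(z)$ lies in $H^\infty(\Omega)$ and so is holomorphic, while $F$ is locally bounded since $\|F(z)\|\le 1$ throughout $\Omega$; hence Lemma~\ref{hol} gives that $F:\Omega\to B(\h)$ is holomorphic. Thus $F\in\ess(\Omega,B(\h))$ extends $f$, and Lemma~\ref{exist.tran} then yields that $(\Omega,V)$ is a complete np-pair.

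The step requiring the most care is the passage from the scalar $E$ to a well-defined bounded operator at each point, and two distinctions matter here. First, one must use that $E$ is linear on the entire space of functions vanishing at $a$, not merely on the Schur ball, since $\|g_{h,k}\|_V$ may exceed $1$. Second, one must use the full strength of isometry rather than mere boundedness: a larger operator norm for $E$ would only give $\|F(z)\|\le C$ and would destroy the norm-preserving conclusion. Once these are in place, assembling the bounded sesquilinear forms into a genuinely holomorphic $B(\h)$-valued function is immediate from the weak holomorphy criterion of Lemma~\ref{hol}.
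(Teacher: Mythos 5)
Your proof is correct and takes essentially the same route as the paper's: both reduce via Lemma~\ref{exist.tran} to extending a fixed $f\in\ess_{a}(V,B(\h))$, define the extension weakly by applying $E$ to the matrix coefficients $\langle f(\cdot)h,k\rangle$, recover $F(z)$ with $\|F(z)\|\le 1$ from the resulting bounded (sesqui)linear data via the Riesz representation theorem using the isometry of $E$, and conclude holomorphy from local boundedness and Lemma~\ref{hol}. Your closing remark that $E$ must be understood as linear on the whole subspace of $H^\infty(V)$ vanishing at $a$, not merely on the Schur ball, is a correct reading of a point the paper leaves implicit.
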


\begin{proof}
	It follows from Lemma~\ref{exist.tran} that it is enough to show that any mapping in $\ess_{a}(V, B(\h))$ has an extension to $\ess(\Omega, B(\h))$. So fix $f\in \ess_{a}(V, B(\h))$ and $z\in \Omega$. Applying the Riesz reprezentation theorem to
	the maps $$\h\ni k\mapsto E(\langle f(\cdot) h, k\rangle)(z),$$ where $h\in \h$, we get for each $z \in \O$ and $h \in \h$,
	a vector 
	$\Psi(z,h)\in \h$ such that 
	\[
	 E(\langle f(\cdot) h, k\rangle)(z) \= \langle \Psi(z,h), k\rangle.
	 \] Note that $h\mapsto \Psi(z,h)$ is linear since $E$ is, 
	 so we can define $F(z):\h\to \h$ by $F(z)h=\Psi(z,h)$. It is straightforward to check that $F(z)\in B(\h))$ and $||F(z)||\leq 1$. Since  $z\mapsto F(z)$ is holomorphic by Lemma \ref{hol}, we are done.
\end{proof}

Combining Proposition \ref{propc6} and Lemma \ref{lemc7}, we have proved Theorem \ref{exist.lem.10}.

\section{Two crossed discs}
\label{secd}

In this section we shall prove Theorem~\ref{twocrossed}. When $\h$ is one dimensional the result was essentially proved in \cite{akmc21}. The key argument used there relied on the Herglotz representation theorem.
 To go to infinite dimensions, we shall use  realization formulas.

\begin{proof}[Proof of theorem~\ref{twocrossed}]
The implications (iii) $\Rightarrow $ (ii) $\Rightarrow$ (i) are trivial.
Let us show (i) $\Rightarrow$ (iii).

Let $\varphi\in \ess(\T,B(\mathcal H))$. By Lemma \ref{exist.tran}, we can assume that $\varphi(a)=0$.
By using the newtwork realization formula (\cite[Thm. 3.16]{amy20}) for the functions
 $$\l\mapsto  \frac{\varphi(\psi_1(\l))}{\lambda} \text{ and } \lambda\mapsto \frac{\varphi(\psi_2(\l))}{\lambda},$$
 we get Hilbert spaces $\mathcal K_1$, $\mathcal K_2$ and unitary operators $U_1=\left(\begin{array}{cc} A_1 & B_1 \\ C_1 & D_1 \end{array}\right):\mathcal H\oplus \mathcal K_1\to \mathcal H\oplus \mathcal K_1$ and $U_2=\left(\begin{array}{cc} A_2 & B_2 \\ C_2 & D_2 \end{array}\right):\mathcal H\oplus \mathcal K_2\to \mathcal H\oplus \mathcal K_2$ such that 
 $$\varphi(\psi_1(\l)) = A_1\lambda + B_1\lambda (I - D_1\lambda)^{-1} C_1\lambda$$ and
$$\varphi(\psi_2(\l)) = A_2\lambda + B_2\lambda (I - D_2\lambda)^{-1} C_2\lambda.$$ Replacing $U_1$ with $\left(\begin{array}{ccc} A_1 & B_1 & 0\\ C_1 & D_1 &0 \\ 0 & 0 & I_{\mathcal K_2} \end{array}\right)$ and $U_2$ with $\left(\begin{array}{ccc} A_2 & 0 & B_2 \\ 0 & I_{\mathcal K_1} & 0\\ C_2 & 0 & D_2 \end{array}\right)$, we have that $$U_1,U_2:\mathcal H \oplus \mathcal K\to \mathcal H \oplus \mathcal K,$$ where $\mathcal K = \mathcal K_1 \oplus \mathcal K_2$.


Let ${\mathcal M} = \mathcal H \oplus \mathcal K$.
Consider the following holomorphic map $\T\to B(\mathcal M)$
$$(\dag)\begin{cases} \psi_1(\lambda)\mapsto \l U_1,\\
\psi_2(\l) \mapsto \l U_2.
\end{cases}
$$

{\bf Claim.} There is a sequence $\Phi_n\in \ess(G, B(\mathcal M))$ that approximates $(\dag)$ in the following sense: $\Phi_n(\psi_1(\lambda)) = \l U_1 \,$ and $\Phi_n(\psi_2(\lambda) )= \l W_n $, where $W_n$ are unitary and converge to $U_2$ in norm.

\bigskip

Observe that the Claim implies the assertion. Indeed, with respect to the decomposition 
$ \mathcal M = \h \oplus \k$, write $$\Phi_n= \left( \begin{array}{cc} \Phi_{1,n} & \Phi_{2,n} \\ \Phi_{3,n} & \Phi_{4,n} \end{array} \right).$$ Then $$f_n(z):= \Phi_{1,n}(z) + \Phi_{2,n}(z) (I - \Phi_{4,n}(z))^{-1} \Phi_{3,n}(z)$$ is an extension from which
 we can take a subsequence converging to the extension we are looking for.

\bigskip

{\em Proof of the claim.}
	Without loss of generality we can assume that $U_1$ is the identity. 
	If  ${\mathcal M}$ is finite dimensional, we use the fact that the eigenvalues of $U_2$ are unimodular, and
	by hypothesis we can extend the function $\tau \lambda$ for any unimodular $\tau$.
	In the infinite dimensional case, choose unitaries $V_n$ that are diagonalizable and converge
	to $U_2$.
	Each $V_n = W_n D_n W_n^*$ where $W_n$ is unitary and $D_n$ is diagonal.
	For each diagonal entry $\tau_k$, let $g_k$ be the Schur function on $G$ that extends the function
	$\psi_1(\lambda) \mapsto \lambda$ and $\psi_2(\lambda) \mapsto \tau_k \lambda$. 
	Then $\Phi_n = W_n D_{g_k} W_n^*$ where $D_{g_k}$ is the diagonal operator with entries $g_k$.
\end{proof}

\section{Linear vs. complete}
\label{sece}

Consider two particular examples:
\begin{enumerate}
	\item The  diamond $\Delta=\{z\in \c^2:\ |z_1|+|z_2|<1\}$ and the two crossed discs $T:=(\D\times \{0\})\cup (\{0\}\times \D).$ 
	\item The symmetrized bidisk $\mathbb G_2$ and the set $\T = \{(2\l, \l^2):\l \in \d\}\cup \{(\beta + \bar\beta \l, \l):\l \in \d\}$ from \eqref{sym}.
\end{enumerate}
It follows from Theorem~\ref{twocrossed} that both $(\Delta, T)$ and  $(\mathbb G_2, \T)$ are complete np-pairs.

Another way to prove this for $(\Delta, T)$ is to observe that the map that sends $f$ in
$\ess_0(T, B(\h))$
to the function $ \{z\mapsto f(z_1,0) + f(0,z_2)\}$ in  $\ess(\Delta, B(\h))$ is linear, and then apply Proposition
\ref{propc6}. We shall show that this argument cannot be used for $(\mathbb G_2, \T)$.

Let us introduce some additional notation before proving this.
Let $\Sigma = \{(2\l, \l^2):\ \l \in \d\}$ and $\Do = \{0\}\times \d$.
Let 
 $[f(\l),g(\l)]$ denote the function on $\Sigma \cup \Do$ that is equal to $f(\lambda)$ on $(2\lambda, \lambda^2)$ and $g(\lambda)$ on $(0,\lambda)$.
For $b\in \mathbb D$ let $m_b$ be a M\"obius map $m_b (\lambda)= \frac{b-\lambda}{1-\bar b \lambda}.$

\begin{thm}\label{G2linear}
Let $\T$ be given by \eqref{sym}, and 
let $a \in \T$. There is no 
 linear isometric extension
	operator from $\ess_{a}(\T)$ to $\ess(\mathbb G_2)$.
\end{thm}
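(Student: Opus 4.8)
The plan is to assume, toward a contradiction, that a linear isometric extension operator $L\colon \ess_a(\T)\to \ess(\mathbb G_2)$ exists, and to play it off against the one‑parameter family of Carath\'eodory extremals of $\mathbb G_2$, whose dependence on the parameter is not affine. I first treat the case where $a$ is the crossing point. Up to an automorphism of $\mathbb G_2$ (these are induced by a single M\"obius map $m$ applied to both bidisc coordinates, so they fix the royal variety $\Sigma$ and permute the discs in \eqref{sym}) I may assume $\beta=0$, so that $\T=\Sigma\cup\Do$ with crossing point $(0,0)$ and $\ess_a(\T)=\{[f,g]:f,g\in\ess_0(\d)\}$. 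Then $L$ is determined on this family by the two functions $P:=L[\lambda,0]$ and $Q:=L[0,\lambda]$ in $\ess(\mathbb G_2)$, which, being extensions, satisfy $P(2\lambda,\lambda^2)=\lambda$, $P(0,\lambda)=0$, $Q(2\lambda,\lambda^2)=0$, $Q(0,\lambda)=\lambda$. Since $[-\lambda,\omega\lambda]=-[\lambda,0]+\omega[0,\lambda]$ and $L$ is linear,
\[
L[-\lambda,\omega\lambda]=-P+\omega Q\qquad(\omega\in\t).
\]

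Next I bring in the candidate extremals $\Phi_\omega(s,p)=\dfrac{2\omega p-s}{2-\omega s}$ for $\omega\in\t$, which are Schur functions on $\mathbb G_2$. A direct computation gives $\Phi_\omega(2\lambda,\lambda^2)=\dfrac{2\omega\lambda^2-2\lambda}{2-2\omega\lambda}=-\lambda$ and $\Phi_\omega(0,\lambda)=\omega\lambda$, so $\Phi_\omega$ restricts to $[-\lambda,\omega\lambda]$ on $\T$. Because $[-\lambda,\omega\lambda]$ already has supremum norm $1$, every element of $\ess(\mathbb G_2)$ restricting to it is automatically norm preserving; hence both $-P+\omega Q$ and $\Phi_\omega$ are Schur extensions of the same datum.

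The heart of the argument is the following rigidity statement: \emph{for each $\omega\in\t$, the function $\Phi_\omega$ is the unique element of $\ess(\mathbb G_2)$ whose restriction to $\T$ equals $[-\lambda,\omega\lambda]$.} Granting this, we get $-P+\omega Q=\Phi_\omega$ for all $\omega\in\t$. Subtracting the identities for two parameters $\omega_1,\omega_2$ and using
\[
\Phi_{\omega_1}-\Phi_{\omega_2}=\frac{(\omega_1-\omega_2)(4p-s^2)}{(2-\omega_1 s)(2-\omega_2 s)},
\]
we obtain $(\omega_1-\omega_2)Q=\Phi_{\omega_1}-\Phi_{\omega_2}$, that is,
\[
Q=\frac{4p-s^2}{(2-\omega_1 s)(2-\omega_2 s)} .
\]
The right–hand side genuinely depends on $\omega_1,\omega_2$ (evaluate at any point with $s\neq 0$), while $Q$ is a fixed function; this is the contradiction. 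Equivalently, the coefficient of $s^3$ in the Taylor expansion of $\Phi_\omega$ at $(0,0)$ equals $-\omega^2/8$, which is quadratic in $\omega$ and so cannot agree with the affine‑in‑$\omega$ coefficient of $-P+\omega Q$.

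The main obstacle is the rigidity lemma, since holomorphy plus the values on $\T$ are far from determining $F$: the difference of two extensions is only forced to be divisible by $s(s^2-4p)$, leaving an infinite‑dimensional ambiguity that is cut down solely by the Schur bound. I would prove it using that $\Phi_\omega$ is inner on $\mathbb G_2$ (its modulus equals $1$ almost everywhere on the Shilov boundary $b\mathbb G_2$, as one checks on the symmetrization of $\t^2$): if $F\in\ess(\mathbb G_2)$ also restricts to $[-\lambda,\omega\lambda]$ then $\eta:=F-\Phi_\omega=s(s^2-4p)k$ vanishes on $\T$, and a second–variation analysis along the segment of Schur extensions joining $\Phi_\omega$ to $F$, carried out at the boundary points where $|\Phi_\omega|=1$, forces $\eta\equiv 0$. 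This is exactly the uniqueness of the relevant Carath\'eodory extremal, which may also be cited from the Agler--Young theory of $\mathbb G_2$, and it is the step where the special complex‑geodesic structure of $\Sigma$ and $\Do$ is used. Finally, for a general base point $a\in\T$ one runs the same computation with $\Phi_\omega$ replaced by the M\"obius transforms $m_{c(\omega)}\circ\Phi_\omega$, chosen so that they vanish at $a$; these remain inner and rigid, and their dependence on $\omega$ is still non‑affine, so the contradiction persists.
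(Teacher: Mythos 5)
Your overall strategy --- pit the linearity of $L$ against the non-affine $\omega$-dependence of the unique extremal extensions $\Phi_\omega$ --- is exactly the paper's, and your crossing-point computation ($-P+\omega Q=\Phi_\omega$, then subtracting two parameters to force $Q$ to equal an $\omega$-dependent function) is correct \emph{modulo} the rigidity lemma. But that lemma is the heart of the matter, and your proposed proof of it does not work. Innerness of $\Phi_\omega$ on the Shilov boundary, even combined with a ``second-variation'' argument along the segment of extensions, cannot by itself force uniqueness: on the bidisc the datum $\lambda\mapsto\lambda$ on the diagonal $\{(\lambda,\lambda)\}$ has the two inner extensions $z_1$ and $z_2$ (and all their convex combinations), so an inner extension of an extremal datum on a one-dimensional variety is in general far from unique. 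The uniqueness here is a genuinely finer fact, and the paper proves it differently: for every $\omega'\in\t$ the disc $\lambda\mapsto\pi(\lambda,\omega'\lambda)$ (with $\pi$ as in \eqref{eqb1}) meets $\T$ to second order at the origin in such a way that any Schur extension $F$ makes $\lambda\mapsto F(\pi(\lambda,\omega'\lambda))/\lambda$ a Schur function attaining \emph{equality} in the Schwarz--Pick inequality at $0$; hence it is a degree-one Blaschke product, determined by the data. Letting $\omega'$ range over $\t$ pins $F$ down on $\{|\lambda|=|\mu|\}$, a uniqueness set for $\D^2$. If you want to cite the uniqueness from the Agler--Lykova--Young theory you must actually locate it there; as written, your lemma is unproved and your sketch would fail.

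The second genuine gap is the final paragraph on general $a\in\T$. Automorphisms of $\mathbb G_2$ preserving $\Sigma\cup\Do$ only rotate $\Do$, so a general base point cannot be moved to the crossing point, and your ``same computation'' is not available: once you replace $\Phi_\omega$ by $m_{c(\omega)}\circ\Phi_\omega$, the corresponding data on $\T$ are M\"obius (not affine) in $\omega$, so the two-point subtraction trick disappears. One must instead expand the data in powers of $\omega$ (or $\bar\omega$), apply the bounded operator $L$ term by term, and compare coefficients --- and the resulting contradictions differ by case. For $a\in\Do$ the constant term forces $L$ of a nonconstant Schur function to be the constant $\lambda_0$, contradicting the extension property. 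For $a=(2\lambda_0,\lambda_0^2)\in\Sigma$ with $\lambda_0\neq 0$ the coefficient of $\omega$ forces $L[0,\lambda]=\bigl(p-(s/2)^2\bigr)/\bigl(1-\bar\lambda_0 s/2\bigr)^2$ as in \eqref{eqe4}, and one must then \emph{prove} this function violates the Schur bound on $\mathbb G_2$; this is the nontrivial boundary inequality \eqref{eqe5}, verified in the paper by a maximization over $\t^2$, and it fails precisely because $\lambda_0\neq0$. ``The dependence on $\omega$ is still non-affine, so the contradiction persists'' is not an argument: non-affineness of the right-hand side only yields a contradiction after you check that the extracted coefficient functions cannot be legitimate values of an isometric extension operator, which is exactly the content of these case-by-case computations.
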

\begin{proof}
	
	Since all sets of the form \eqref{sym} are holomorphically equivalent, it suffices to prove the assertion for $\T=\Sigma\cup \Do$.
	
	For unimodular $\alpha$ and $\beta$ consider the function $f_{\alpha, \beta}: \T \to \mathbb D$ given by the formula:
	$$f_{\alpha, \beta}(s,p) =  \begin{cases} \alpha s/ 2,\quad \text{on $\Sigma$},\\
	\beta p,\quad \text{on $\Do$}.
	\end{cases}$$
So $ f_{\alpha, \beta} =[\alpha \lambda, \beta \lambda] $.
	Let   $\omega = \beta \alpha^{-1}$. It was shown in \cite{aly19} that $\alpha \Phi_\omega$ extends $f_{\alpha, \beta}$ where $$\Phi_{\omega}(s,p):= \frac{s/2 +  \omega p}{1+ \omega s/2}.$$
	
	{\bf Claim.} We shall show that $\alpha \Phi_{\beta \alpha^{-1}}$ is the unique np extension of $f_{\alpha,\beta}$ to $\mathbb G_2 \to \D$:
	
	To prove the claim let $F$ be some extension of  $f_{\alpha, \beta}$ that has norm 1.
 Let $\omega\in \mathbb T$. Then \[
	F(s,p) = \alpha s/2 + \beta (p - (s/2)^2) + O( s (s^2 - 4p)), \]
since $F$ minus the first two terms vanishes on $\T$.
With $\pi$ as in \eqref{eqb1}, we get
	 \[
	F(\pi(\lambda, \omega \lambda)) = \alpha \frac{1+\omega}{2} \lambda - \beta \left(\frac{1-\omega}{2}\right)^2 \lambda^2 + O(\lambda^3).
	\]
Then the Schwarz lemma implies that 	the map
\[
\l \mapsto \frac{1}{\l}
F(\pi(\lambda, \omega \lambda)) \]
 is a M\"obius map from $\D$ to $\D$. Thus if $G$ is another extension, $F\circ \pi$ and $G\circ\pi$ coincide on $\{|\lambda|=|\mu|:\ (\lambda, \mu)\in \D^2\}$, and the claim follows. 
 \oec

	Suppose that there is $a \in \T$ and a linear isometric operator $L : \ess_{a}(\T) \to \ess(\mathbb G_2)$. Let us consider two cases.
	
	i) $a=(0,\l_0)\in\Do$. Note that $\lambda \mapsto [m_{\beta a}(\alpha \lambda), m_{\beta a} (\beta \l)]$ belongs to the Schur class $\ess_{a}(\T)$. The crucial fact following from the Claim is that the equality 
	\be \label{eqe1}
	L[m_{\beta\lambda_0}(\alpha\lambda), m_{\beta\lambda_0} (\beta \lambda)] = m_{\beta \lambda_0}(\alpha \Phi_{\beta/\alpha})
	\ee
 holds for any $\alpha,\beta\in \t$. 
 Writing out \eqref{eqe1}, and using $\omega = \bar \alpha \beta$, we get
 \[
 L \left[  \frac{\beta \lambda_0 - \alpha \lambda}{1 -  \bar \beta \bar \lambda_0 \alpha   \lambda },  \frac{\beta\lambda_0 - \beta \lambda}{1- \lambda \bar \lambda_0}\right]
 \= \frac{\beta \lambda_0 - \alpha \Phi_{\omega}(s,p)}{1 - \bar \beta \bar \lambda_0 \alpha\Phi_{\omega}(s,p)} .
 \]
 Dividing by $\beta$ we get
\be
\label{eqe2}
	L\left[\frac{ \lambda_0 -  \lambda\bar \omega}{ 1-  \lambda \bar \lambda_0\bar \omega },  \frac{\lambda_0 - \lambda}{1- \lambda \bar \lambda_0}\right] =  \frac{ \lambda_0 - \bar \omega  \Phi_{ \omega}(s,p)}{1 - \bar \lambda_0 \bar \omega \Phi_{\omega}(s,p)}.
	\ee
Write
\[
\Phi_\omega(s,p) \= \frac{s/2 + \omega p}{1 + \omega s/2} \= 
\frac{\bar \omega s/2 + p}{\bar \omega + s/2},
\]
and expand both sides of \eqref{eqe2} in powers of $ \bar \omega$.
 Expanding the left hand side we get
  \beq
  \left[\frac{ \lambda_0 -  \lambda\bar \omega}{ 1-  \lambda \bar \lambda_0\bar \omega },  \frac{\lambda_0 - \lambda}{1- \lambda \bar \lambda_0}\right] &\= & \sum_{n\geq 0} \bar \omega^n f_n =\\ 
  &=& \left[\lambda_0, \frac{ \lambda_0 - \l}{ 1- \l \bar \lambda_0}\right] + \bar \omega \left[(| \lambda_0|^2 -1) \l , 0 \right] + \sum_{n\geq 2} \bar \omega^n f_n,
  \eeq
  where $f_n\in H^{\infty}(\T)$, $f_n(a)=0$, and the series converges uniformly, so $L$ can be applied term by term.
The right hand side gives
\[
\frac{ \lambda_0 - \bar \omega  \Phi_{ \omega}(s,p)}{1 - \bar \lambda_0 \bar \omega \Phi_{\omega}(s,p)}
\=
\l_0 - \bar \omega \frac{2p}{s} ( 1 - |\l_0|^2) + O(\bar \omega^2) .
\]
Comparing the constant terms, we would have
\[
L \left[\lambda_0, \frac{ \lambda_0 - \l}{ 1- \l \bar \lambda_0}\right]  = \l_0, 
\]
a contradiction.

	ii) We are left with the case $a=(2\lambda_0, \lambda_0^2)\in \Sigma$, $\lambda_0\neq 0$. We shall proceed as before starting with a function $[m_{\alpha\l_0}(\alpha \l), m_{\alpha \l_0} (\beta \l)]$ that clearly lies in $\ess_{a}(\T)$.
As before, we get that
	\begin{equation}\label{eq:L}
	L\left[\frac{\lambda_0-\lambda}{ 1- \bar \lambda_0 \lambda}, \frac{\lambda_0 - \omega \lambda}{1 - \bar\lambda_0 \omega \lambda}\right] = \frac{\lambda_0 - \Phi_\omega(s,p)}{1 - \bar\lambda_0 \Phi_\omega (s,p)},\quad \omega\in \mathbb T.\end{equation}
	Expanding in powers of $\omega$ and looking at the coefficient of $\omega$, we get
 \be
 \label{eqe4}
 L[0, \lambda] = \frac{p - (s/2)^2}{(1- \bar \lambda_0 s/2)^2}.
 \ee
As $[0,\l]$ lies in $\ess_{a}(\T)$, we must have that the function $(s,p)\mapsto \frac{p - (s/2)^2}{(1- \bar \lambda_0 s/2)^2}$ sends the symmetrized bidisc to the unit disc. In particular, putting $(s,p) = (\l + \mu, \l \mu)$ for  $\lambda, \mu$ in the unit disc we
would get 
 the inequality 
 \be
 \label{eqe5}
 |(\lambda -\mu)/2| \ \leq \  |1 - \bar \lambda_0 (\lambda +\mu)/2|
 \ee
 holds for $(\l,\mu)\in \d^2$. 
This however is not possible whenever $\lambda_0\neq 0$.
Indeed, let $t = |\l_0|$. Then \eqref{eqe5} is equivalent to the claim that
\[
| \l - \mu |^2  \ \leq \ | 2 - t (\l+\mu)|^2 \qquad \forall (\l,\mu) \in \overline{\D^2} ,
\]
since by continuity the inequality would extend to the boundary. Assume both $\l$ and $\mu$ are unimodular, then this becomes
\[
-2 (1+t^2) \Re (\bar \l \mu) + 4t \Re (\l + \mu) \ \leq \ 2 + 2t^2.
\]
Let $\l = e^{i\theta}$ and $\mu = e^{-i\theta}$.
We get the inequality
\be
\label{eq8}
-2 (1+t^2) \cos (2 \theta) + 4t \cos (\theta) \ \leq \ 2 + 2 t^2.
\ee
By calculus, the maximum of the left hand side comes when we
choose $\theta$ so that 
\[
\cos(\theta) = \frac{t}{1+t^2}, \quad \sin(\theta) = \sqrt{ 1 -  \frac{t^2}{(1+t^2)^2}} .
\]
Then \eqref{eq8} becomes
\[
2 \frac{ 1 + 4t^2 + t^4}{1+t^2} \ \leq \ 2 (1+t^2) .
\]
This clearly fails unless $t=0$.
\end{proof}

\bibliography{references_uniform_partial}
\end{document}